\newtheorem{Th}{Theorem}[section]
\newtheorem{Prop}[Th]{Proposition}
\newtheorem{Cor}[Th]{Corollary}
\DeclareMathOperator{\supp}{supp}
\DeclareMathOperator*{\esssup}{ess\,sup}
\newcommand{\R}{\mathbb{R}}
\newcommand{\C}{\mathbb{C}}
\title[The NLSA in weighted Sobolev spaces]
{The Nonlinear Schr\"odinger--Airy equation in weighted Sobolev spaces}
\author[A. J. Castro]{A. J. Castro}
\author[K. Jabbarkhanov]{K. Jabbarkhanov}
\author[L. Zhapsarbayeva]{L. Zhapsarbayeva}
\address{\newline
       Alejandro J. Castro, Khumoyun Jabbarkhanov, Lyailya Zhapsarbayeva \newline
       Department of  Mathematics, Nazarbayev University, \newline
		010000 Nur-Sultan, Kazakhstan}
\email{\{alejandro.castilla,khumoyun.jabbarkhanov\}@nu.edu.kz,
leylazhk67@gmail.com}
 \thanks{
A. J. C. and L. Z. are supported by the Nazarbayev University Faculty Development Competitive Research Grants Program 11022021FD2921.
K. J. is supported by the Ministry of Education and Science of the Republic of Kazakhstan via the project SSH2021016.}
 \keywords{Schr\"odinger equation,  weighted Sobolev space, persistence property,  fractional derivative}
 \subjclass[2010]{35A02, 35A02, 35G25, 35Q55}
\begin{document}

\begin{abstract}
We study the persistence property of the solution for the nonlinear Schr\"odinger--Airy equation with initial data in the weighted Sobolev space
$H^{1/4}(\R)\cap L^2(|x|^{2m}dx)$, $0<m \leq 1/8$, via the contraction principle. 
\end{abstract}

\maketitle



\section{Introduction}
In this paper we consider the initial value problem for the 
\textit{nonlinear Schr\"odinger--Airy equation} (NLSA)
\begin{equation}\label{eq:NLSAE}
\left\{
\begin{array}{ll}
\partial_t u 
+ i \, a \, \partial_x^2 u  
+ b \, \partial_x^3 u
+ i \, c \, |u|^2 u 
+ d \, |u|^2 \partial_x u
+ e \, u^2 \partial_x \overline{u} =0, & x \in \mathbb{R}, \, t>0, \\
u(x,0)=u_0(x), & x \in \mathbb{R},
\end{array}
\right.
\end{equation}
where $u=u(x,t)$ is a complex valued function and 
$a,b,c,d,e$ are parameters. This model was proposed in \cite{K,HK} to describe the propagation of signals in optic fibers.\\

G. Staffilani \cite{Sta} showed that the equation \eqref{eq:NLSAE} is locally (in time) well-possed
(i.e. there exists a unique stable solution) for 
initial data in the $L^2$-based Sobolev space $H^s(\mathbb{R})$, for any $s \geq 1/4$ provided that
$a \in \R$, $b \in \R \setminus \{0\}$
and $c,d,e \in \C$.
Later, X. Carvajal \cite{Car2006} established the global well-posedness in $H^s(\mathbb{R})$, for $s > 1/4$ and real coefficients satisfying $be \neq 0$ and $c=(d-e)a/(3b)$. Moreover, Carvajal  obtained in \cite{Car2013} that the above results are sharp, in the sense that the map data-solution is not uniformly continuous for any $u_0 \in H^s(\mathbb{R})$ when $s<1/4$, under the constrains $e=0$, $bd>0$ and $c=ad/(3b)$.\\

For particular choices of the parameters, the IVP \eqref{eq:NLSAE} reduces to well-known equations of great relevance in quantum mechanics and water waves. For example, taking $a=- 1$,  $b=d=e=0$ and $c \in \R$, one gets the cubic \textit{nonlinear Schr\"odinger equation} (NLS)
\begin{equation*} 
i \partial_t u + \partial_x^2 u + c |u|^2 u =0.
\end{equation*} 
Its local and global well-posedness in $H^s(\mathbb{R})$, $s \geq 0$, was proven by  Y. Tsutsumi \cite{Tsu} while for $s<0$ it is ill-posed (see \cite{CCT2003,KPV}). Now, for $a=c=e=0$, and $b=d=1$ we obtain the  
\textit{modified Korteweg-de Vries equation} (mKdV)
\begin{equation*} 
\partial_t u + \partial_x^3 u + |u|^2 \partial_x u =0.
\end{equation*}
C. Kenig, G. Ponce and L. Vega \cite{KPV1993} showed
that it is local well-posed in $H^s(\R)$, $s \geq 1/4$;  and  J. Colliander, M. Keel, G. Staffilani, H. Takaoka and T. Tao \cite{CKSTT} demonstrated that the real mKdV is globally well-posed for $s > 1/4$. Furthermore, these results are again optimal (\cite{CCT2003,KPV}). On the other hand,
if we fix $a=-1$, $b=c=0$ and $d=2e \in \R$ we have the
\textit{derivative nonlinear Schr\"odinger equation} (DNLS)
\begin{equation*}
i \partial_t u 
+ \partial_x^2 u 
+i e \partial_x(|u|^2u)
=0,
\end{equation*}
whose local well-posedness was provided by H. Takaoka \cite{Tak1999} for $s \geq 1/2$ and globally for $s>1/2$ in 
\cite{CKST2002}.\\

So far, all mentioned results concern with solvability in Sobolev spaces.  As T. Kato remarked in \cite{Kato1983}, functions in
$H^s(\R)$ do not necessarily decay fast at infinity,
which is in many applications unrealistic.
In order to show well-posedness in spaces of fast--decaying functions we can replace $H^s(\R)$ by a weighted Sobolev space of type
$H^s(\R) \cap L^2(w(x)dx)$,
for some appropriate $w$ which is typically taken of power type. 
The latest results in this direction for the NLS have been obtained by 
J. Nahas and G. Ponce \cite{NP2009},
for the mKdV by J. Nahas \cite{Nah2012}
(see also \cite{FonLinPn}) and for the NLSA by X. Carvajal and W. Neves \cite{CN2010}.  
It is also worth mentioning that over the past few years this weighted Sobolev approach has been carried out for several nonlinear evolution equations 
(see for instance 
\cite{BusJi,BusJi2016,BusJi2018,CN2015,CunPas,FonPn,ILP2013,ILP2015,Jim2013}, and references therein).\\

Coming back to the IVP for the NLSA, and by means of 
 Duhamel’s principle, it is straightforward to show that any solution of the partial differential equation \eqref{eq:NLSAE} also satisfies the integral equation
\begin{equation}\label{iEq01}
u=e^{-t(ia\partial^2_{x}+b\partial^3_{x})}u_0-\int^t_{0}e^{-(t-t')(ia\partial^2_{x}+b\partial^3_{x})}
\big(i \, c \, |u|^2 u 
+ d \, |u|^2 \partial_x u
+ e \, u^2 \partial_x \overline{u}\big)(\cdot,t')\, dt', 
\end{equation}
where $e^{-t(ia\partial^2_{x}+b\partial^3_{x})}$ stands for the Schr\"odinger--Airy semigroup (see Section \ref{Sect:SchoAirSem} below for its precise definition). However, notice that not all solutions of \eqref{iEq01} also verify \eqref{eq:NLSAE}, specially if $u_0$ belongs to a Sobolev space of low regularity (see for example \cite[p. 96]{LP2015}).\\

We are now in position of stating our main result.

\begin{Th}\label{Th:main} 
Let $a \in \R$, $b \in \R \setminus \{0\}$ and 
$c,d,e \in \mathbb{C}$ with $d=2e$.
Assume that $u_{0}\in H^{1/4}(\R)\cap L^2(|x|^{2m}dx)$ for $0<m \leq 1/8$.
Then, there exist $T>0$ and a unique 
 solution $u$ of the integral equation \eqref{iEq01} such that 
\begin{equation}\label{eq:persistenceu}
u(\cdot,t)\in H^{1/4}(\R)\cap L^2(|x|^{2m}dx),
\quad t \in (0,T].
\end{equation}
\end{Th}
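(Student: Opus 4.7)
The plan is to apply the Banach fixed point theorem to the map
\begin{equation*}
\Phi(u)(t) := e^{-t(ia\partial_x^2+b\partial_x^3)}u_0 - \int_0^t e^{-(t-t')(ia\partial_x^2+b\partial_x^3)}\,N(u)(t')\,dt',
\end{equation*}
with $N(u)=ic|u|^2u+d|u|^2\partial_x u+e\,u^2\partial_x\overline{u}$, on a small closed ball of a suitably designed resolution space $X_T$. Following Staffilani's construction in \cite{Sta}, the norm of $X_T$ controls simultaneously the $L^\infty_t H^{1/4}_x$ regularity on $[0,T]$ together with the Strichartz, local smoothing and maximal--function auxiliary norms that absorb the derivative loss of the nonlinearity at the critical regularity $s=1/4$. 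On top of these I would add the weighted component $\||x|^m u(\cdot,t)\|_{L^\infty_t L^2_x}$, so that the persistence property \eqref{eq:persistenceu} becomes a structural feature of membership in $X_T$.

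For the linear part, the unweighted norms are standard from \cite{Sta}. The new ingredient is the weighted bound
\begin{equation*}
\big\|\,|x|^m e^{-t(ia\partial_x^2+b\partial_x^3)}u_0\big\|_{L^2_x}.
\end{equation*}
I would handle it by Plancherel and the Stein fractional derivative, writing $|x|^m$ as a Riesz derivative on the Fourier side and commuting it through the unitary multiplier $e^{it(a\xi^2+b\xi^3)}$. A fractional Leibniz/chain rule then yields a leading piece comparable to $\||x|^m u_0\|_{L^2}$ plus an error of the shape $t\,(1+t)^{\alpha}\bigl(\|D^{2m}u_0\|_{L^2}+\|u_0\|_{L^2}\bigr)$, where $D^s$ is the homogeneous Riesz derivative. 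The loss $2m$ comes from differentiating the cubic Airy phase, and is absorbed by $u_0\in H^{1/4}(\R)$ precisely when $2m\le 1/4$, i.e.\ $m\le 1/8$; this is where the admissible range of $m$ enters. The same bound, invoked inside the Duhamel term via Minkowski in time, reduces the weighted analysis of the nonlinear piece to fractional and weighted estimates on $N(u)$.

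The main obstacle is therefore the weighted bound for the derivative nonlinearity,
\begin{equation*}
\Big\|\,|x|^m \int_0^t e^{-(t-t')(ia\partial_x^2+b\partial_x^3)}\bigl(d|u|^2\partial_x u+e\,u^2\partial_x\overline{u}\bigr)(t')\,dt'\Big\|_{L^\infty_t L^2_x}.
\end{equation*}
Here the algebraic hypothesis $d=2e$ is crucial: the identity $\partial_x(|u|^2 u)=2|u|^2\partial_x u+u^2\partial_x\overline{u}$ collapses both derivative terms into the conservation-law form $e\,\partial_x(|u|^2u)$. After applying the linear weighted bound from the previous step under the integral sign, the surviving $\partial_x$ can be transferred onto the Airy semigroup and traded against the $1/2$-gain of the local smoothing effect, reducing matters to controlling $\|D^{2m}(|u|^2u)\|_{L^2_{tx}}$ and its weighted companion via a fractional Leibniz rule, expressed in the Strichartz and maximal norms already built into $X_T$. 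For $T$ sufficiently small the resulting estimates produce a strict contraction on a ball of $X_T$, yielding existence, and the same contraction estimate applied to two candidate fixed points yields uniqueness in the class \eqref{eq:persistenceu}.
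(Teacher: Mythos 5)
Your overall architecture coincides with the paper's: a contraction on a ball of Staffilani's resolution space augmented by the component $\||x|^m u\|_{L^\infty_T L^2_x}$, the identification of $d=2e$ as the hypothesis that collapses $d|u|^2\partial_x u + e u^2\partial_x\overline{u}$ into the full derivative $e\,\partial_x(|u|^2u)$, and the use of the sharp smoothing estimate for $\partial_x\int_0^t e^{-(t-t')(ia\partial_x^2+b\partial_x^3)}(\cdot)\,dt'$ to absorb that derivative in the weighted region $|x|\gtrsim 1$. All of this matches Section 4 of the paper.

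The genuine gap is in the step you dispose of in one sentence: ``a fractional Leibniz/chain rule then yields a leading piece comparable to $\||x|^m u_0\|_{L^2}$ plus an error $\ldots (\|D^{2m}u_0\|_{L^2}+\|u_0\|_{L^2})$.'' Neither of the standard tools you invoke applies to $D^m_\xi\big(e^{it(a\xi^2+b\xi^3)}\widehat{u}_0\big)$. The chain rule \eqref{eq:chainrule} requires $\|F'(f)\|_{L^\infty}$, and for $F=e^{it\phi}$ with $\phi(\xi)=a\xi^2+b\xi^3$ the relevant factor $it\phi'(\xi)e^{it\phi(\xi)}$ is unbounded in $\xi$; the Leibniz rule \eqref{eq:KPVThA8} requires placing $D^{m}_\xi e^{it\phi}$ in some Lebesgue norm, and this quantity is not uniformly bounded either (unlike the quadratic Schr\"odinger phase, the cubic phase admits no clean pointwise formula for its fractional derivative). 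Your heuristic for the $2m$ loss is exactly right, but making it rigorous is the entire content of the paper's Section~3: one must first regularize the symbol to $e^{it\phi(\xi)}/\langle\xi\rangle^{2m}$, use the frequency-localized Leibniz estimate \eqref{NLem3.2} of Nahas (whose right-hand side involves $\|Q_N D^m_\xi g\|_{L^\infty_\xi\ell^1_N}$ rather than a plain $L^\infty$ norm of $D^m_\xi g$), and then prove via contour deformation and the near/intermediate/far case analysis of Proposition~\ref{nahLm2.3} that
\begin{equation*}
\Big\|Q_N D^{m}_{\xi}\Big(\frac{e^{it(a\xi^2+b\xi^3)}}{\langle\xi\rangle^{2m}}\Big)\Big\|_{L_{\xi}^{\infty}\ell^1_N}\lesssim 1+t .
\end{equation*}
The factor $\langle\xi\rangle^{2m}$ peeled off onto $\widehat{u}_0$ is what produces the $H^{2m}\subset H^{1/4}$ loss and hence the restriction $m\le 1/8$; an additional commutator estimate $[\langle\xi\rangle^{-2m},D^m_\xi]$ is needed to recover $\||x|^m u_0\|_{L^2}$ from the regularized term. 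Without this lemma (or a substitute for it), the weighted bounds for both the linear evolution and the Duhamel terms $A$ and $B$ are unproven, so the fixed-point argument does not close. A secondary, minor point: the smoothing estimate \eqref{nah3.3} actually gains a full derivative from $L^1_xL^2_T$ data, not the ``$1/2$-gain'' you cite, and it is this full gain that is used on $\partial_x\big(|x|^m(1-\chi)|u|^2u\big)$ after commuting $\partial_x$ past the weight.
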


\quad \\
A few remarks are in order. Firstly, Theorem \ref{Th:main} improves on the Sobolev regularity for the local part of \cite[Theorem 3.7]{CN2010} established for $s=2$. Recall that for the unweighted case ($m=0$), $s=1/4$ is  optimal for the NLSA (\cite{Car2013}).
Moreover, our restriction of $2m \leq s$ was shown to be sharp in the context of the mKdV (see \cite[Remark (b) on p. 5356]{FonLinPn}), which makes it natural to conjecture that in the NLSA setting is too.
Finally, it is worth-mentioning that our approach imposes $d=2e$ only in the analysis of the term $B_{2,2,2}$ at the very end of Section \ref{Sect:ProfTh1.1}, in order to exploit the symmetry given by \eqref{eq:fullderiv}
and hence take advantage of the property \eqref{nah3.3}. It is an open question for the authors how to adapt the reasoning for arbitrary values $d$ and $e$. \\

This paper is organized as follows. 
In Section \ref{Sect:Notation} we introduce notation as well as the main properties used later. 
Deeply inspired by the work of Nahas \cite{Nah2012} for the mKdV equation, in Section \ref{Sect:crucial}
we generalize his crucial estimate to the Schr\"odinger--Airy setting. Finally in Section \ref{Sect:ProfTh1.1} we prove Theorem \ref{Th:main} via the contraction principle.

\quad 

\section{Notation, definitions and Preliminaries}
\label{Sect:Notation}

Throughout the paper we use the standard notation $A \lesssim B$ when there exists $C>0$ such that $A \leq C B$, and $A \sim B$ when simultaneously $A \lesssim B$ and $B \lesssim A$. 

\subsection{Function spaces}
For $f : \R \rightarrow \C$
consider its Fourier transform 
$$ 
\widehat{f} (\xi)
:= 
\int_{\R} e^{-ix\xi} f(x) \, dx, 
\quad \xi\in \R
$$
and recall the inverse Fourier formula
$$ 
f(x)
:=  
\frac{1}{2\pi}
\int_{\R} e^{ix\xi} \widehat{f}(\xi) \, d\xi.
$$
We also introduce the $L^2$--based Sobolev space $H^s(\mathbb{R})$ of order $s \in \mathbb{R}$ via the norm
$$
\|f\|_{H^s}
:=
\Big(  \int_{\R}
\langle \xi \rangle^{2s}
|\widehat{f}(\xi)|^2 \, d\xi\Big)^{1/2}$$
for 
$\langle \xi \rangle
:=(1+|\xi|^2)^{1/2}$.
It is well-known that they satisfy the inclusion
$H^{s_2}(\R) \subset H^{s_1}(\R)$, $s_1 \leq s_2$, or equivalently
\begin{equation}\label{eq:Sobolevinclusion}
\|f\|_{H^{s_1}}
\lesssim
\|f\|_{H^{s_2}}, \quad s_1 \leq s_2.
\end{equation}

\quad \\
Now, for a function of two variables
 $f : \R \times [0,T] \rightarrow \C$
and $1 \leq p, q \leq \infty$, let's
define the mixed norms
$$
\|f\|_{L^p_xL^q_T}
:=\Big( 
\int_{\R}
\Big|\int^T_0 |f(x,t)|^q \, dt\Big|^{p/q} \, dx
\Big)^{1/p} 
$$
and
$$
\|f\|_{L^q_TL^p_x}
:=\Big( 
\int^T_0
\Big|\int_{\R} |f(x,t)|^p \, dx\Big|^{q/p} \, dt
\Big)^{1/q}
$$
with the usual modifications when $p=\infty$
or $q=\infty$. Moreover, we write $\widehat{f}$ meaning that the Fourier transform is taken with respect to the spatial variable $x$.\\

\subsection{Schr\"odinger--Airy semigroup}
\label{Sect:SchoAirSem}
We denote by 
$e^{-t(ia\partial^2_{x}+b\partial^3_{x})}$
to the free propagator of the linear Schr\"{o}dinger--Airy equation 
\begin{equation*} 
\left\{
\begin{array}{ll}
\partial_t u 
+ i \, a \, \partial_x^2 u  
+ b \, \partial_x^3 u
=0, & x \in \mathbb{R}, \, t>0, \\
u(x,0)=u_0(x), & x \in \mathbb{R},
\end{array}
\right.
\end{equation*}
whose solution 
$u(x,t)
=:e^{-t(ia\partial^2_{x}+b\partial^3_{x})}u_0(x)$ 
is easily represented on the Fourier side
\begin{equation}\label{eq:Schrsemig}
(e^{-t(ia\partial^2_{x}+b\partial^3_{x})} u_0)^{\wedge}(\xi)
= e^{it(a \xi^2+b \xi^3)} \widehat{u}_0(\xi).
\end{equation}
In \cite[Theorem 4.1(ii)]{Sta} the following estimate was established
\begin{equation}\label{nah3.3}
\Big\|
\partial_x \int^t_0 e^{-(t-t')(ia\partial^2_{x}+b\partial^3_{x})}f(\cdot,t')\, dt'
\Big\|_{L^\infty_T L^2_x}
\lesssim
\|f\|_{L^{1}_xL^{2}_T},
\end{equation}
which will be crucial in Section \ref{Sect:ProfTh1.1}.\\

\subsection{Fractional derivatives} 
For $\alpha\in\R$, it is standard to define the fractional derivative $D_x^{\alpha}$ as the Fourier multiplier given by
\begin{equation}\label{eq:fracderiv}
\widehat{D_x^{\alpha}f}(\xi)
:=
|\xi|^{\alpha}\widehat{f}(\xi).
\end{equation}
In this notes we will restrict ourselves to the case of $0 < \alpha \leq 1$.
Our motivation for considering these derivatives is in connection with the Sobolev spaces and also with the $L^2$--weighted spaces for non-integer powers $m$, since Plancherel's theorem provides
\begin{equation*}
\|f\|_{L^2(|x|^{2m},dx)}
:= 
\||x|^m f\|_{L^2_x}
\sim
\|D^m_{\xi} \widehat{f}\|_{L^2_{\xi}}.
\end{equation*}

For completeness, we state below the main properties involving fractional derivatives that will be needed in the sequel.\\

In \cite[Lemma 4.7]{Sta} it was shown that, for some $\theta>0$
\begin{equation}\label{Sta4.7}
\| f\|_{L^5_TL^{\infty}_x}
\lesssim 
T^{\theta}
\big(
\| f\|_{L^5_xL^{10}_T}
+
\|D^{1/4}_x f\|_{L^5_xL^{10}_T} \big).  
\end{equation}

\quad \\
Let's introduce the commutator 
$$
[\phi,D_x^{\alpha}]f
:= 
\phi D_x^{\alpha} f- D_x^{\alpha} ( \phi f).
$$
When $\alpha=1$, it corresponds to the so called first commutator of Calder\'on (see \cite{Calderon}).
Furthermore,  it is known that
(see \cite[Lemma 2.2]{Murray}) 
\begin{equation}\label{eq:commutator}
\|[\phi,D_x^{\alpha}] f\|_{L^2_{x}} 
\lesssim
\|\partial_x \phi \|_{L^{\infty}_x}
\| f\|_{L^2_{x}}.
\end{equation}

\quad \\
Now, we recall a chain rule for fractional derivatives 
(\cite[Theorem A.7]{KPV1993})
\begin{equation} \label{eq:chainrule}
\|D_x^\alpha F(f) \|_{L^p_x}
\lesssim 
\|F'(f)\|_{L^\infty_x}
\|D_x^\alpha(f) \|_{L^p_x},
\quad 1<p<\infty
\end{equation}
and its space-time analogous (\cite[Theorem A.6]{KPV1993})
\begin{equation}\label{eq:KPVThA6}
\|D_x^{\alpha} F(f)\|_{L^{p}_x L^{q}_T}
\lesssim
\|F'(f)\|_{L^{p_1}_x L^{q_1}_T}
\|D_x^{\alpha} f\|_{L^{p_2}_x L^{q_2}_T}
\end{equation}
for exponents $1 < p, p_1, p_2, q, q_2 < \infty$
and $1<q_1 \leq \infty$
satisfying 
\begin{equation}\label{eq:exponents}
\frac{1}{p} = \frac{1}{p_1} + \frac{1}{p_2}, \quad 
\frac{1}{q} = \frac{1}{q_1} + \frac{1}{q_2}.
\end{equation}

\quad \\
We finish this review with some fractional versions of the Leibniz rule.  For exponents as in \eqref{eq:exponents} one has that 
(see \cite[Theorem A.8]{KPV1993})
\begin{equation}\label{eq:KPVThA8}
\|
D_x^\alpha(fg)
- f D_x^\alpha g
- g D_x^\alpha f
\|_{L^p_x L^q_t}    
\lesssim 
\|D_x^{\alpha_1} f\|_{L^{p_1}_x L^{q_1}_t}
\|D_x^{\alpha_2} g\|_{L^{p_2}_x L^{q_2}_t}
\end{equation}
provided that $\alpha = \alpha_1 + \alpha_2$
with $0 \leq \alpha_1, \alpha_2 \leq \alpha$.
Moreover, the more delicate estimate
\begin{equation}\label{NLem3.2}
\|D^{\alpha}_x(fg)-gD^{\alpha}_xf\|_{L^2_x}
\lesssim 
\|Q_ND^{\alpha}_xg\|_{L_x^{\infty}\ell^1_N}
\|f\|_{L^2_x}
\end{equation}
was obtained in \cite[Lemma 4.2]{Nah2012},
where $Q_N$ represents the multiplier operator
\begin{equation*}
\widehat{Q_N f} (x)
:=
\big(
\eta( 2^{-N}x)
+
\eta( - 2^{-N}x)
\big)
\widehat{f}(x), \quad N \in \mathbb{Z}
\end{equation*}
and $\eta$ is a smooth function  supported in $[1/2,2]$ that verifies
\begin{equation}\label{eq:defeta}
\sum_{N \in \mathbb{Z}} 
\big(
\eta( 2^{-N} x)
+
\eta( - 2^{-N} x)
\big)
=1, \quad x \in \R \setminus \{0\}.
\end{equation}
Here, as usual, we understand
\begin{equation*}
\|Q_ND^{\alpha}_xg\|_{L_x^{\infty}\ell^1_N}
:= 
\esssup_{x \in \R}
\sum_{N \in \mathbb{Z}}
|Q_ND^{\alpha}_xg(x)|.
\end{equation*}


\quad 

\section{Crucial estimate}
\label{Sect:crucial}
In the proof of Theorem \ref{Th:main} in Section \ref{Sect:ProfTh1.1} we will often use property \eqref{NLem3.2} to control the $L^2$-norm of fractional derivatives acting on the product of functions, in particular repeatedly for  
\begin{equation*}
g(\xi)
:= \frac{e^{it(a\xi^2+b\xi^3)}}{\langle\xi\rangle^{2m}}.
\end{equation*}
Therefore, it becomes necessary to understand the behaviour of 
$\|Q_ND^{m}_\xi g\|_{L_{\xi}^{\infty}\ell^1_N}$, which is the  purpose of this section
(see Corollary \ref{NLem2.4} below).
Furthermore, given the definition of $Q_N$, we will show that it is enough to estimate  oscillatory integrals of type
\begin{equation}\label{eq:oscintgoal}
\int_{\R}\varphi_{\omega}(\xi-z)\frac{e^{it(a z^2+b z^3)}}{\langle z\rangle^{2m}}dz,    
\end{equation}
where $\varphi$ denotes a smooth function with compact Fourier support away from the origin and 
\begin{equation}\label{eq:defdilation}
\varphi_\omega(z)
:= \omega \varphi(\omega z), \quad \omega \neq 0.
\end{equation}

\quad \\
Inspired by \cite[p. 842]{Nah2012}
(where the particular case of $a=0$ and $b=1$ was considered), we say that a point $\xi \in \R$ is \textit{near} if 
\begin{equation}\label{eq:defnear}
\Big|\xi+\frac{a}{2b}\Big|^2
\leq
\frac{1}{100}\frac{\omega}{|b|t}
+\Big(\frac{a}{2b}\Big)^2,
\end{equation}
\textit{intermediate} when
\begin{equation}\label{eq:definterm}
\frac{1}{100}\frac{\omega}{|b|t}
+
\Big(\frac{a}{2b}\Big)^2
<
\Big|\xi+\frac{a}{2b}\Big|^2
\leq 
100 \frac{\omega}{|b|t}
+\Big(\frac{a}{2b}\Big)^2,
\end{equation}
and
\textit{far} provide that 
\begin{equation}\label{eq:deffar}
\Big|\xi+\frac{a}{2b}\Big|^2
> 
100 \frac{\omega}{|b|t}+\Big(\frac{a}{2b}\Big)^2.
\end{equation}

We analyze now the behaviour of \eqref{eq:oscintgoal}.

\begin{Prop}\label{nahLm2.3}
Let $a \in \R$, $b \in \R \setminus \{0\}$, $t>0$ and $\omega >1$ verifying that 
\begin{equation}\label{eq:nicehypo}
\frac{w}{|b|t} \geq 
\max\Big\{1, 10^4 \Big(\frac{a}{2b}\Big)^2\Big\}.
\end{equation}
Moreover, assume that $ 0 \leq m < 1$ and   
$\varphi$ is a smooth function with
$\supp \widehat{\varphi} \subset [1/2,2]$.
Then 
\begin{equation}\label{eq:key}
\Big|
\int_{\R}\varphi_{\omega}(\xi-z)\frac{e^{it(a z^2+b z^3)}}{\langle z\rangle^{2m}}dz
\Big|
\lesssim 
\left\{
\begin{array}{ll}
(1+t) \, \omega^{-m}, & 
\text{for} \,\, \xi \,\, \text{intermediate},\\
(1+t) \, \omega^{-1}, & 
\text{for} \,\, \xi \,\, \text{near or far}.
\end{array}
\right.
\end{equation}
\end{Prop}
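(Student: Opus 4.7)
The plan is to reduce the oscillatory integral to a standard form via a rescaling substitution, then split into the three regimes (near, intermediate, far) and exploit in each case the crucial property that $\supp \widehat{\varphi} \subset [1/2,2]$ forces $\varphi$ to have all vanishing moments.

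First I would change variables $z = \xi - y/\omega$ to rewrite the integral as $e^{it(a\xi^2 + b\xi^3)} \int_{\R} \varphi(y)\, e^{i\Psi(y)}\, \ell(y)\, dy$, where $\Psi(y) = -\alpha_1 y + \alpha_2 y^2 - \alpha_3 y^3$ is the exact Taylor expansion of $t[\phi(\xi - y/\omega) - \phi(\xi)]$, $\ell(y) = \langle \xi - y/\omega\rangle^{-2m}$, and
\[
\alpha_1 = \frac{t\phi'(\xi)}{\omega},\qquad \alpha_2 = \frac{t\phi''(\xi)}{2\omega^2},\qquad \alpha_3 = \frac{bt}{\omega^3}.
\]
Using the hypothesis \eqref{eq:nicehypo} (which forces $(a/(2b))^2$ to be negligible compared to $\omega/(|b|t)$), I would verify that the near, intermediate and far conditions \eqref{eq:defnear}--\eqref{eq:deffar} translate, up to absolute constants, into $|\alpha_1| \lesssim 1/30$, $|\alpha_1| \sim 1$ with $|\xi| \sim \sqrt{\omega/(|b|t)}$, and $|\alpha_1| \gtrsim 100$, respectively; and that in all cases $|\alpha_2|, |\alpha_3| = O(\omega^{-1})$ (with an extra factor bounded by $\sqrt{|\alpha_1|/\omega}$ for $|\alpha_2|$ in the far regime).

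For the intermediate case I would use the trivial bound $|I(\xi)| \leq \|\varphi\|_{L^1}\, \sup_y |\ell(y)| \lesssim \langle\xi\rangle^{-2m} \lesssim (|b|t/\omega)^m$, which absorbs into the claimed $\lesssim (1+t)\omega^{-m}$ after separating the cases $t\leq 1$ and $t>1$. For the near and far cases the key point is that $\widehat\varphi(\alpha_1) = 0$ whenever $\alpha_1 \notin [1/2,2]$, and more generally $\int_{\R} \varphi(y)\, p(y)\,dy = 0$ for every polynomial $p$, since $\widehat\varphi$ vanishes identically near the origin. This lets me subtract any Taylor polynomial of $e^{i\Psi(y)}\ell(y)$ around $y=0$ without changing the value of the integral. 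For the far case, where $|\alpha_1| \gtrsim 100$, I would complement this with integration by parts against $e^{-i\alpha_1 y}$: each IBP gains a factor $|\alpha_1|^{-1}$ and differentiates the smooth amplitude $\varphi(y)\,e^{i(\alpha_2 y^2 - \alpha_3 y^3)}\,\ell(y)$, whose derivatives are bounded by positive powers of $|\alpha_2|, |\alpha_3|$ and $\omega^{-1}$; a bounded number of IBPs reaches the $\omega^{-1}$ target. For the near case, where $|\alpha_1| \lesssim 1/30$ is small but bounded away from zero, IBP against $e^{-i\alpha_1 y}$ is unhelpful; instead I would integrate by parts iteratively against $\varphi$ itself (using that $\varphi$ and all its iterated antiderivatives $\Phi_N$ are Schwartz with vanishing moments), producing geometrically decaying factors $|\alpha_1|^N \leq (1/30)^N$ plus a remainder of size $1/\omega$ coming from $\alpha_2$, $\alpha_3$, and the smooth amplitude $\ell$. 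Letting $N$ grow to balance the geometric and $\omega^{-1}$ terms yields $|I(\xi)| \lesssim 1/\omega$.

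The main obstacle, I anticipate, is the near regime: unlike far, one cannot directly exploit the largeness of the linear phase; and unlike intermediate, the crude supremum bound is not sharp enough. One must carefully implement the vanishing-moments/iterated-IBP argument so that the growth of $\|\Phi_N\|_{L^1}$ with $N$ is controlled (which uses that $\widehat\varphi$ being supported away from $0$ keeps $\widehat{\Phi_N} = \widehat{\varphi}/(i\eta)^N$ bounded on the support of $\widehat\varphi$), and so that the net bound is uniform in $\omega$.
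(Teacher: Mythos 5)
Your proposal takes a genuinely different route from the paper. The paper first splits off the tail $|z-\xi|>\varepsilon$ using the pointwise decay $|\varphi_\omega(\xi-x)|\lesssim \omega^{-1}(\xi-x)^{-2}$, and then treats the local piece by deforming the contour onto a semicircular arc (Cauchy--Goursat), exploiting that $\varphi_\omega$ is entire of exponential type (Paley--Wiener, since $\supp\widehat\varphi\subset[1/2,2]$), carefully bounding $\mathrm{Re}\big[it(az^2+bz^3)\big]$ on the arc in each regime, choosing the upper or lower half-plane according to the sign of $b$, and invoking Nahas's integral inequality \eqref{nahas_lemma_22}. You instead stay on the real line: after rescaling, the compact Fourier support of $\varphi$ is exploited through $\widehat\varphi(\alpha_1)=0$ off $[1/2,2]$ and vanishing moments, combined with non-stationary-phase integration by parts. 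Both proofs use the same trivial bound in the intermediate regime, and both ultimately rest on the same structural fact about $\widehat\varphi$; your version trades the contour bookkeeping for Taylor-coefficient bookkeeping and is arguably more elementary and more robust (it does not need the explicit completion-of-squares identity for the real part of the phase).

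Two points need care before this is a proof. First, in the far regime the phrase ``a bounded number of IBPs reaches the $\omega^{-1}$ target'' is not correct on its own: each integration by parts gains only the \emph{constant} $|\alpha_1|^{-1}\le 10^{-2}$ on the term where the derivative falls on $\varphi$ itself, so finitely many IBPs give a constant, never $\omega^{-1}$. The fix is exactly the moment cancellation you mention for the near case: since $|\alpha_1|>2$ there, $\widehat\varphi(\alpha_1)=0$, so you may first replace the non-oscillatory amplitude $e^{i(\alpha_2y^2-\alpha_3y^3)}\langle\xi-y/\omega\rangle^{-2m}$ by its difference with the constant $\langle\xi\rangle^{-2m}$; a \emph{single} IBP on what remains then yields $\lesssim(|\alpha_2|+|\alpha_3|+\omega^{-1})/|\alpha_1|\lesssim\omega^{-1}$, using $|\alpha_2|/|\alpha_1|\lesssim(\omega|\xi+a/(2b)|)^{-1}$ and $|\xi+a/(2b)|\gtrsim 1$ in the far regime. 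Second, in the near regime $\alpha_1$ is \emph{not} bounded away from zero (it vanishes at the stationary point of the phase), though this is harmless; and the iterated-antiderivative scheme with $N\to\infty$ is an unnecessary complication: since $|\alpha_1|<1/2$ you have $\widehat\varphi(\alpha_1)=0$ outright, so the same one-step subtraction of $\langle\xi\rangle^{-2m}$, together with $|\alpha_2|,|\alpha_3|,\sup_y|\partial_y\langle\xi-y/\omega\rangle^{-2m}|=O(\omega^{-1})$ and finitely many moments of $|\varphi|$, already gives $O(\omega^{-1})$ with no limit in $N$. With these repairs your argument goes through and recovers \eqref{eq:key}.
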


\begin{proof}
We start recalling some useful properties of the function $\varphi_\omega$.
In \cite[Lemma 3.1]{Nah2012}, it was shown that it is an entire function satisfying,
for $z = x + i y \in \C$ and $\xi \neq x$, 
\begin{equation}\label{nahas_lemma_21}
|\varphi_{\omega}(\xi-z)|
\lesssim 
\left\{
\begin{array}{ll}
\dfrac{|e^{2\omega y}-e^{\omega y/2}|}{\omega^2 y|\xi-z|^2}, 
& y\neq 0,\\
& \\
\dfrac{1}{\omega(\xi-x)^2},
& y=0.
\end{array}
\right.
\end{equation}
Moreover, as in \cite[Corollary 3.1]{Nah2012}, for any $\xi \in \R$ one gets that
\begin{equation*} 
 \varphi_{\omega}(\xi-z)\frac{e^{it(a z^2+b z^3)}}{\langle z\rangle^{2m}}  
\end{equation*}
is analytic on
$\C \setminus (\{|\textrm{Im}\, z|\geq 1\} \cap \{\textrm{Re}\,z=0\}$). The following estimate
 will also be very helpful  (see \cite[Lemma 3.2]{Nah2012})
\begin{equation}\label{nahas_lemma_22}
\int_0^\pi
\frac{
e^{\beta \sin \theta} 
-
e^{\alpha \sin \theta}}{\sin \theta} \, d\theta
\lesssim
\Big(\pi\frac{\alpha}{\beta}-1\Big)
+
1
+
\frac{1}{\pi}\frac{\beta}{\alpha}e^{-\pi \alpha/\beta}, \quad \alpha < \beta < 0.
\end{equation}

\quad \\
For $\varepsilon>0$, conveniently chosen later, and taking into account \eqref{nahas_lemma_21}, we readily deduce 
\begin{align}\label{eq:easypart}
\Big|
\int_{\R \setminus [\xi - \varepsilon, \xi + \varepsilon]}
\varphi_{\omega}(\xi-z)\frac{e^{it(a z^2+b z^3)}}{\langle z\rangle^{2m}}dz \Big|
& \lesssim
\int_{\R \setminus [\xi - \varepsilon, \xi + \varepsilon]}
\frac{1}{\omega(\xi-x)^2}dx
\lesssim \frac{1}{\omega\varepsilon}.
\end{align}
It remains to analyze the integral
\begin{equation*} 
I
:=
\int_{\xi - \varepsilon}^{\xi + \varepsilon}
\varphi_{\omega}(\xi-z)
\frac{e^{it(a z^2+b z^3)}}{\langle z\rangle^{2m}} \, dz
\end{equation*} 
for which we distinguish the situations of $\xi$ intermediate, near or far. \\

\underline{\textit{Intermediate case}}.
In this situation it is, for example, enough to fix $\varepsilon$ 
such that 
\begin{equation*}
100 \varepsilon^2
:=
\frac{1}{100}\frac{\omega}{|b|t}.
\end{equation*} 
Hence,
\begin{align}\label{eq:lowerbound}
|I| 
& 
\leq
\|\varphi\|_{L^1(\R)} \, 
\| |z|^{-2m}\|_{L^\infty(\xi-\varepsilon,\xi+\varepsilon)}
\lesssim
(|\xi|-\varepsilon)^{-2m}
\lesssim
\varepsilon^{-2m}
\lesssim
t^m \omega^{-m},
\end{align}
because by the reverse triangle inequality,
 \eqref{eq:definterm} and \eqref{eq:nicehypo} we have that
\begin{align*}
|\xi|-\varepsilon
& =
\Big|\xi+\frac{a}{2b} -\frac{a}{2b}\Big|
-\varepsilon
\geq
\Big|\xi+\frac{a}{2b}\Big|
- \frac{|a|}{2|b|}
-\varepsilon
> 
8 \varepsilon.
\end{align*}
Therefore, combining \eqref{eq:easypart} and \eqref{eq:lowerbound} we deduce
\begin{equation*}
\Big|
\int_{\R}\varphi_{\omega}(\xi-z)\frac{e^{it(a z^2+b z^3)}}{\langle z\rangle^{2m}}dz
\Big|
\lesssim 
\omega^{-1} + t^m \omega^{-m}
\lesssim (1+t)\omega^{-m},
\end{equation*}
as we wanted.\\

\underline{\textit{Near case}}.
Notice that, as long as $\varepsilon<1$, the Cauchy-Goursat theorem guarantees that 
\begin{equation} \label{eq:CG1}
I
=
\int_{\Gamma_1}
\varphi_{\omega}(\xi-z)\frac{e^{it(a z^2+b z^3)}}{\langle z\rangle^{2m}}dz,
\end{equation} 
where the latter integral is taken along the contour
\begin{equation*}
\Gamma_1
:= 
\{
z= \xi  + \varepsilon e^{i \theta}, \, 
- \pi \leq \theta \leq 0\}.
\end{equation*}
Furthermore, \eqref{nahas_lemma_21} implies
\begin{align} \label{eq:ncgamma1}
& \Big|
\int_{\Gamma_1}
\varphi_{\omega}(\xi-z)\frac{e^{it(a z^2+b z^3)}}{\langle z\rangle^{2m}} \, dz
\Big| \nonumber \\
& \qquad  \lesssim
\int_0^{\pi}
\dfrac{|e^{-2\omega \varepsilon \sin \theta}
-e^{-\frac{\omega \varepsilon}{2} \sin \theta}|}{\omega^2 \varepsilon^2 \sin \theta}
\exp\Big\{
\textrm{Re}\Big[
it
\Big(a 
( \xi  + \varepsilon e^{-i \theta})^2+
b 
( \xi  + \varepsilon e^{-i \theta})^3\Big)
\Big]\Big\}
\, d\theta.
\end{align}
For convenience, let's introduce the notation
$
\xi_{a,b}
:=
\xi+a/(2b)
$. Hence,
\begin{align*}
& \textrm{Re}\Big[
it
\Big(a 
( \xi  + \varepsilon e^{-i \theta})^2+
b 
( \xi  + \varepsilon e^{-i \theta})^3\Big)
\Big]\\
& \qquad =
t b \varepsilon \sin \theta 
\Big[
3 \xi^2
+
\Big(\frac{2a}{b} + 6 \varepsilon \cos \theta \Big)\xi 
+
\frac{2a}{b}
\varepsilon
\cos \theta 
+
4 \varepsilon^2 \cos^2 \theta
-
\varepsilon^2 
\Big] \\
& \qquad =
t b \varepsilon \sin \theta 
\Big[
2
\Big( 
\xi_{a,b} + \frac{3}{2} \varepsilon \cos \theta 
\Big)^2
+
\Big( 
\xi_{a,b} - \frac{a}{2b}\Big)^2
+
\Big( 
\frac{a}{2b} - \varepsilon \cos \theta \Big)^2 \\
& \qquad \qquad \qquad \quad 
-
3 \Big(\frac{a}{2b}\Big)^2
-
\frac{3}{2} \varepsilon^2 \cos^2 \theta
-
\varepsilon^2 
\Big].
\end{align*}
Thus, taking for example $\varepsilon:= 10^{-1}$, by using
\eqref{eq:defnear} and \eqref{eq:nicehypo} we can estimate
\begin{align*}
& \Big|
\textrm{Re}\Big[
it
\Big(a 
( \xi  + \varepsilon e^{-i \theta})^2+
b 
( \xi  + \varepsilon e^{-i \theta})^3\Big)
\Big]
\Big| \\   
& \quad \leq 
t|b| \varepsilon \sin \theta 
\Big[
2
\Big( 
|\xi_{a,b}| + \frac{3}{2}10^{-1}
\Big)^2
+
\Big( 
|\xi_{a,b}| + \frac{|a|}{2|b|}\Big)^2
+
\Big( 
\frac{|a|}{2|b|} + 10^{-1} \Big)^2 
+
3 \Big(\frac{a}{2b}\Big)^2
+
\frac{5}{2} 10^{-2}
\Big] \\
& \quad \leq 
t|b| \varepsilon \sin \theta 
\Big[
4
\Big( 
|\xi_{a,b}|^2 + \frac{9}{4}10^{-2}
\Big)
+
2\Big( 
|\xi_{a,b}|^2 + \Big(\frac{a}{2b}\Big)^2\Big)
+
2\Big( 
\Big(\frac{a}{2b}\Big)^2 + 10^{-2} \Big) \\
& \qquad \qquad \qquad 
+
3 \Big(\frac{a}{2b}\Big)^2
+
\frac{5}{2}10^{-2}
\Big] \\
& \quad \leq 
t|b| \varepsilon \sin \theta 
\Big[
6|\xi_{a,b}|^2 
+
7\Big(\frac{a}{2b}\Big)^2
+
14 \cdot 10^{-2}
\Big]
\leq 
\frac{\omega \varepsilon}{4} \sin \theta.
\end{align*}
Inserting the above bound in \eqref{eq:ncgamma1} and applying the property \eqref{nahas_lemma_22} we get
\begin{align}\label{eq:G1}
& \Big|
\int_{\Gamma_1}
\varphi_{\omega}(\xi-z)\frac{e^{it(a z^2+b z^3)}}{\langle z\rangle^{2m}} \, dz
\Big| 
\lesssim
\int_0^{\pi}
\dfrac{|e^{-2\omega \varepsilon \sin \theta}
-e^{-\frac{\omega \varepsilon}{2} \sin \theta}|}{\omega^2  \sin \theta}
e^{\frac{\omega \varepsilon}{4} \sin \theta}
\, d\theta \nonumber \\
& \qquad \lesssim
\omega^{-2}
\int_0^{\pi}
\dfrac{
e^{-\frac{\omega \varepsilon}{4} \sin \theta}
-
e^{-\frac{7 \omega \varepsilon}{4} \sin \theta}
}{\sin \theta}
\, d\theta
\lesssim
\omega^{-2}.
\end{align}
In conclusion, since $\omega>1$, putting together 
\eqref{eq:easypart}, \eqref{eq:CG1} and
\eqref{eq:G1}
we obtain \eqref{eq:key} for $\xi$ near.\\

\underline{\textit{Far case}}. 
This analysis is slightly different depending on whether $b$ is a positive or a negative number.
Take now 
$\varepsilon^2  
:=
\omega/(|b|t).
$
By the assumptions 
\eqref{eq:deffar} and \eqref{eq:nicehypo}
notice that
\begin{equation*}
|\xi \pm \varepsilon|
\geq
\Big| \xi + \frac{a}{2b} \Big|
-
\Big|\frac{a}{2b} \Big|
-
\varepsilon
>
(10-10^{-2}-1) 
\sqrt{\frac{\omega}{|b|t}}
> 8.
\end{equation*}
Hence, as in \eqref{eq:CG1}, applying again the 
Cauchy-Goursat theorem we write for $b<0$
\begin{equation*} 
I
=
\int_{\Gamma_1}
\varphi_{\omega}(\xi-z)\frac{e^{it(a z^2+b z^3)}}{\langle z\rangle^{2m}}dz,
\end{equation*} 
while for $b>0$
\begin{equation*} 
I
=
- \int_{\Gamma_2}
\varphi_{\omega}(\xi-z)\frac{e^{it(a z^2+b z^3)}}{\langle z\rangle^{2m}}dz
\end{equation*} 
with 
\begin{equation*}
\Gamma_2
:= 
\{
z= \xi  + \varepsilon e^{i \theta}, \, 
0 \leq \theta \leq \pi\}.
\end{equation*}
Proceeding as in the near case above we get 
\begin{align*}
& \Big|
\int_{\Gamma_1}
\varphi_{\omega}(\xi-z)\frac{e^{it(a z^2+b z^3)}}{\langle z\rangle^{2m}} \, dz
\Big| \nonumber \\
& \qquad  \lesssim
t \omega^{-3}
\int_0^{\pi}
\dfrac{
e^{-\frac{\omega \varepsilon}{2} \sin \theta}
-
e^{-2\omega \varepsilon \sin \theta}
}{\sin \theta}
\exp\Big\{
\textrm{Re}\Big[
it
\Big(a 
( \xi  + \varepsilon e^{-i \theta})^2+
b 
( \xi  + \varepsilon e^{-i \theta})^3\Big)
\Big]\Big\}
\, d\theta
\end{align*}
and also
\begin{align*} 
& \Big|
\int_{\Gamma_2}
\varphi_{\omega}(\xi-z)\frac{e^{it(a z^2+b z^3)}}{\langle z\rangle^{2m}} \, dz
\Big|  \\
& \qquad  
\lesssim
t w^{-3}
\int_0^{\pi}
\dfrac{e^{2\omega \varepsilon \sin \theta}
-e^{\frac{\omega \varepsilon}{2} \sin \theta}}{ \sin \theta}
\exp\Big\{
\textrm{Re}\Big[
it
\Big(a 
( \xi  + \varepsilon e^{i \theta})^2+
b 
( \xi  + \varepsilon e^{i \theta})^3\Big)
\Big]\Big\}
\, d\theta,
\end{align*}
where
\begin{align*}
& \textrm{Re}\Big[
it
\Big(a 
( \xi  + \varepsilon e^{\mp i \theta})^2+
b 
( \xi  + \varepsilon e^{\mp i \theta})^3\Big)
\Big]\\
& \qquad =
\pm t b \varepsilon \sin \theta 
\Big[
2
\Big( 
\xi_{a,b} + \frac{3}{2} \varepsilon \cos \theta 
\Big)^2
+
\Big( 
\xi_{a,b} - \frac{a}{2b}\Big)^2
+
\Big( 
\frac{a}{2b} - \varepsilon \cos \theta \Big)^2 \\
& \qquad \qquad \qquad \qquad 
-
3 \Big(\frac{a}{2b}\Big)^2
-
\frac{3}{2} \varepsilon^2 \cos^2 \theta
-
\varepsilon^2 
\Big].
\end{align*}
Moreover, \eqref{eq:deffar} and \eqref{eq:nicehypo} provide
\begin{align*}
& 
\Big[2
\Big( 
\xi_{a,b} + \frac{3}{2} \varepsilon \cos \theta 
\Big)^2
+
\Big( 
\xi_{a,b} - \frac{a}{2b}\Big)^2
+
\Big( 
\frac{a}{2b} - \varepsilon \cos \theta \Big)^2 
-
3 \Big(\frac{a}{2b}\Big)^2
-
\frac{3}{2} \varepsilon^2 \cos^2 \theta
-
\varepsilon^2 \Big] \\
& \qquad 
>
\Big( 
|\xi_{a,b}| - \Big|\frac{a}{2b}\Big|\Big)^2
-
3 \Big(\frac{a}{2b}\Big)^2
-
\frac{5}{2} \varepsilon^2 
>
\Big[
\Big( 10 - 10^{-2} \Big)^2 
-
3 \cdot 10^{-4}
-\frac{5}{2} 
\Big] \varepsilon^2 \\
& \qquad 
> 3 \frac{\omega}{|b|t}.
\end{align*}
Therefore, \eqref{nahas_lemma_22} implies 
for $b<0$
\begin{align*} 
& \Big|
\int_{\Gamma_1}
\varphi_{\omega}(\xi-z)\frac{e^{it(a z^2+b z^3)}}{\langle z\rangle^{2m}} \, dz
\Big|  
\lesssim
t w^{-3}
\int_0^{\pi}
\dfrac{
e^{\frac{-\omega \varepsilon}{2} \sin \theta}
-
e^{-2\omega \varepsilon \sin \theta}
}{ \sin \theta}
e^{- 3 \omega \varepsilon \sin \theta}
\, d\theta \\
& \qquad  \leq
t w^{-3}
\int_0^{\pi}
\dfrac{
e^{\frac{-\omega \varepsilon}{2} \sin \theta}
-
e^{-2\omega \varepsilon \sin \theta}
}{ \sin \theta}
\, d\theta
\lesssim t w^{-3},
\end{align*}
while for $b>0$,
\begin{align*} 
& \Big|
\int_{\Gamma_2}
\varphi_{\omega}(\xi-z)\frac{e^{it(a z^2+b z^3)}}{\langle z\rangle^{2m}} \, dz
\Big|  
\lesssim
t w^{-3}
\int_0^{\pi}
\dfrac{e^{2\omega \varepsilon \sin \theta}
-e^{\frac{\omega \varepsilon}{2} \sin \theta}}{ \sin \theta}
e^{- 3 \omega \varepsilon \sin \theta}
\, d\theta \\
& \qquad  =
t w^{-3}
\int_0^{\pi}
\dfrac{
e^{-\omega \varepsilon \sin \theta}
-
e^{-\frac{ 5 \omega \varepsilon}{2} \sin \theta}
}{ \sin \theta}
\, d\theta
\lesssim t w^{-3},
\end{align*}
which, in combination with \eqref{eq:easypart}, give \eqref{eq:key} for $\xi$ far.
\end{proof}

As a consequence of Proposition \ref{nahLm2.3} we obtain the following.

\begin{Cor}\label{NLem2.4}
Let $a \in \R$, $b \in \R \setminus \{0\}$, $t>0$ and 
$0 < m < 1$. We have that
\begin{equation*}
\Big\|
Q_N D^{m}_{\xi}\Big(\frac{e^{it(a\xi^2+b\xi^3)}}{\langle\xi\rangle^{2m}}\Big)
\Big\|_{L_{\xi}^{\infty}\ell^1_N}
\lesssim 1+t.   
\end{equation*}
\end{Cor}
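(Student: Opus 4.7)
The plan is to reduce the estimate to Proposition \ref{nahLm2.3} by expressing $Q_N D^m_\xi$ as a convolution operator at the dyadic scale $\omega = 2^N$, and then sum the resulting bounds over $N$. Writing $g(\xi) := e^{it(a\xi^2+b\xi^3)}/\langle\xi\rangle^{2m}$, the Fourier multiplier of $Q_N D^m_\xi$ equals $|y|^m(\eta(2^{-N}y)+\eta(-2^{-N}y)) = 2^{Nm}\,m_1(y/2^N)$, where $m_1(s) := |s|^m(\eta(s)+\eta(-s))$ is smooth and supported in $\{|s|\in[1/2,2]\}$. Splitting $m_1 = m_1^+ + m_1^-$ according to the sign of $s$, and letting $\varphi^\pm$ be the inverse Fourier transforms of $m_1^\pm$, a direct scaling computation (with the notation \eqref{eq:defdilation}) gives
\begin{equation*}
Q_N D^m_\xi g(\xi) \;=\; 2^{Nm}\int_{\R}\bigl(\varphi^+_{2^N}+\varphi^-_{2^N}\bigr)(\xi-z)\,\frac{e^{it(a z^2+b z^3)}}{\langle z\rangle^{2m}}\,dz.
\end{equation*}
Since $\supp\widehat{\varphi^+}\subset[1/2,2]$, Proposition \ref{nahLm2.3} applies directly to the $\varphi^+$-piece; for the $\varphi^-$-piece, whose Fourier transform is supported in $[-2,-1/2]$, the analogous bound follows by repeating the contour-deformation argument of the proposition in the opposite half-plane.

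Fix $\xi\in\R$ and let $N_0\in\mathbb{Z}$ be the smallest integer with $2^{N_0}\geq 1$ and $2^{N_0}/(|b|t)\geq\max\{1,10^4(a/(2b))^2\}$, so that \eqref{eq:nicehypo} is equivalent to $N\geq N_0$; observe $2^{N_0}\lesssim 1+t$. For $N<N_0$ the proposition is unavailable, so I discard the oscillation and invoke Young's inequality: the kernel $K_N$ of $Q_N D^m_\xi$ satisfies $\|K_N\|_{L^1}\lesssim 2^{Nm}$ by the scaling $K_N(\xi)=2^{N(1+m)}\check{m}_1(2^N\xi)$ with $\check{m}_1$ Schwartz, so $|Q_N D^m_\xi g(\xi)|\leq\|K_N\|_{L^1}\|g\|_{L^\infty}\lesssim 2^{Nm}$, and the geometric sum $\sum_{N<N_0}2^{Nm}\lesssim 2^{N_0 m}\lesssim (1+t)^m$.

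For $N\geq N_0$ I further split according to whether $\xi$ is far, intermediate, or near at scale $2^N$. Proposition \ref{nahLm2.3} combined with the factor $2^{Nm}$ gives $|Q_N D^m_\xi g(\xi)|\lesssim(1+t)$ in the intermediate case and $|Q_N D^m_\xi g(\xi)|\lesssim(1+t)\,2^{-N(1-m)}$ in the near and far cases. The intermediate range contributes only $O(1)$ terms (its length in $N$ is at most $\log_2 10^4$), each $\lesssim 1+t$. The near and far ranges are two geometric series in $2^{-N(1-m)}$ that converge since $m<1$; each is dominated by its boundary term, and because that boundary is at least $N_0$ we have $2^{-N(1-m)}\lesssim 2^{-N_0(1-m)}\lesssim (1+t)^{-(1-m)}$, so each series contributes $\lesssim (1+t)^m$. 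Summing all four contributions yields the claim.

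The main technical point is the opening identification of $Q_N D^m_\xi$ as convolution with $2^{Nm}(\varphi^+_{2^N}+\varphi^-_{2^N})$ together with the extension of Proposition \ref{nahLm2.3} to the reflected kernel $\varphi^-$; the remainder is dyadic bookkeeping, where the delicate balance is between the $2^{Nm}$-growth absorbed in the exceptional range and the $2^{-N(1-m)}$-decay gained in the near/far ranges — compatible precisely because $m<1$.
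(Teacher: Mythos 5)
Your proof is correct and follows essentially the same route as the paper: identify $Q_N D^m_\xi$ with convolution against $2^{Nm}\varphi_{2^N}$ for a fixed $\varphi$ with $\widehat{\varphi}(x)=|x|^m\eta(x)$, use the trivial $L^1$--$L^\infty$ bound for $N$ below the threshold $N_0$, and for $N\geq N_0$ apply Proposition \ref{nahLm2.3} together with the observation that each $\xi$ is intermediate for only boundedly many $N$ while the near/far contributions form a convergent geometric series since $m<1$. The only (cosmetic) divergence is your treatment of the negative-frequency piece $\varphi^-$: rather than rerunning the contour argument in the opposite half-plane, the paper reflects the function instead of the kernel (writing that term as $\int h(-\xi-z)f(-z)\,dz$, which is the proposition with $b$ replaced by $-b$), but both work.
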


\begin{proof}
For $\eta$ as in \eqref{eq:defeta},
we start introducing a new Fourier multiplier $Q_N^m$
via
\begin{equation*}
\widehat{Q_N^m f} (x)
:=
| 2^{-N} x |^m
\big(
\eta( 2^{-N}x)
+
\eta(- 2^{-N}x) \big)
\widehat{f}(x)
\end{equation*}
which, together with \eqref{eq:fracderiv}, allows us to write
\begin{equation*}
Q_N D^{m}_{\xi} f
=
2^{Nm} Q_N^m f.
\end{equation*}
Notice that
\begin{align*} 
|Q_N^m f (\xi)| 
& 
\lesssim 
\Big|
\int_{\R}
h(\xi-z)f(z) \, dz 
\Big|
+
\Big|
\int_{\R}
h(-\xi-z)f(-z) \, dz 
\Big|
\end{align*}
for
$\widehat{h}(x)
:= 
| 2^{-N} x |^m
\eta( 2^{-N}x )$.
Moreover, if we further call
$\widehat{\varphi}(x):=|x|^m \eta(x)$, it is clear that 
$\supp \widehat{\varphi} \subset [1/2,2]$
and also $\varphi_{2^N}=h$ because, by the convention \eqref{eq:defdilation}, we see that
\begin{align*}
\widehat{\varphi_{2^N}}(x)  
=
2^N \widehat{\varphi (2^N \cdot)}(x)  
=
\widehat{\varphi} (2^{-N}x) 
=
\widehat{h}(x).
\end{align*}
Next, we choose $N_0 \in \mathbb{N}$ such that 
\begin{equation*}
2^{N_0} \geq 
|b|t \max\Big\{1, 10^4 \Big(\frac{a}{2b}\Big)^2\Big\}. 
\end{equation*}
Thus, 
\begin{align*}
& \Big\|
\sum_{N=-\infty}^{N_0}
\Big|Q_N D^{m}_{\xi}\Big(\frac{e^{it(a\xi^2+b\xi^3)}}{\langle\xi\rangle^{2m}}\Big)\Big|
\Big\|_{L_{\xi}^{\infty}}    
\leq
\sum_{N=-\infty}^{N_0}
2^{Nm}
\Big\|
Q_N^m \Big(\frac{e^{it(a\xi^2+b\xi^3)}}{\langle\xi\rangle^{2m}}\Big)
\Big\|_{L_{\xi}^{\infty}} \\
& \qquad \lesssim
\sum_{N=-\infty}^{N_0}
2^{Nm} \|\varphi_{2^N}\|_{L^1(\R)}
\Big\|\frac{e^{it(a z^2 \pm b z^3)}}{\langle z\rangle^{2m}} \Big\|_{L^\infty(\R)}
\lesssim
\sum_{N=-\infty}^{N_0}
2^{Nm}
\lesssim 1.
\end{align*}
On the other hand, Proposition \ref{nahLm2.3} provides
\begin{align*}
& \Big\|
\sum_{N=N_0+1}^{\infty}
\Big|Q_N D^{m}_{\xi}\Big(\frac{e^{it(a\xi^2+b\xi^3)}}{\langle\xi\rangle^{2m}}\Big)\Big|
\Big\|_{L_{\xi}^{\infty}} \\
& \qquad \lesssim 
\Big\|
\sum_{\substack{N=N_0+1\\ \xi \textrm{ intermediate}}}^{\infty}
2^{Nm} \Big| {Q}_N^m \Big(\frac{e^{it(a\xi^2+b\xi^3)}}{\langle \xi\rangle^{2m}}\Big)\Big|
\Big\|_{L_{\xi}^{\infty}} 
+
\Big\|
\sum_{\substack{N=N_0+1\\ \xi \textrm{ near or far}}}^{\infty}
2^{Nm} \Big| {Q}_N^m \Big(\frac{e^{it(a\xi^2+b\xi^3)}}{\langle \xi\rangle^{2m}}\Big)\Big|
\Big\|_{L_{\xi}^{\infty}}\\ 
& \qquad  
\lesssim 
(1+t)
\Big(
\Big\|
\sum_{\substack{N=N_0+1\\ \xi \textrm{ intermediate}}}^{\infty} 2^{Nm} 2^{-Nm}
\Big\|_{L_{\xi}^{\infty}}
+ \sum^{\infty}_{N=N_0+1} 2^{Nm} 2^{-N}\Big)
\lesssim 
1+t,
\end{align*}
because there are finitely many $N$ for which  a fixed $\xi$ is intermediate, recall \eqref{eq:definterm} taken with $\omega=2^N$. 
\end{proof}

\section{Proof of Theorem \ref{Th:main}}
\label{Sect:ProfTh1.1}
Consider the mapping
\begin{equation*}
\Phi(u):=e^{-t(ia\partial^2_{x}+b\partial^3_{x})}u_0-\int^t_{0}e^{-(t-t')(ia\partial^2_{x}+b\partial^3_{x})}(i \, c \, |u|^2 u 
+ d \, |u|^2 \partial_x u
+ e \, u^2 \partial_x \overline{u})\, dt'
\end{equation*}
and for some $\rho,\, T>0$, that will be conveniently fixed later, define the complete metric space 
$$
X^{\rho}_T:=\{u \, : \, \|u\|_{X_T} \leq \rho \} 
$$
endowed with the norm
\begin{equation*} 
\|u\|_{X_T}:= \|u\|_{Y_T} +\||x|^{m}u\|_{L^{\infty}_TL^2_x},    
\end{equation*}
where
\begin{equation*}
\|u\|_{Y_T}
:= \mu^T_{1}(u)+\mu^T_{2}(u)+\mu^T_{3}(u)+\mu^T_{4}(u) +\mu^T_{5}(u),
\end{equation*}
and
\begin{align*}    
\mu^T_{1}(u)
& :=
\|u\|_{L_T^{\infty}L_x^2}
+
\|D_x^{1/4}u\|_{L_T^{\infty}L_x^2},\\  
\mu^T_{2}(u)
& :=
\|\partial_x u\|_{L_x^{\infty}L_T^2}
+
\|D_x^{1/4}\partial_xu\|_{L_x^{\infty}L_T^2},\\  
\mu^T_{3}(u)
&:=
\|\partial_x u\|_{L_x^{20}L_T^{5/2}},  \\
\mu^T_{4}(u)
&:= 
\|u \|_{L_x^{5}L_T^{10}}
+\|D_x^{1/4}u\|_{L_x^{5}L_T^{10}},\\  
\mu^T_{5}(u)
& := \|u\|_{L_x^{4}L_T^{\infty}}. 
\end{align*}
Our aim is to show that $\Phi$ is a contraction on $X^{\rho}_T$, and hence the Banach fixed-point theorem will guarantee the existence and uniqueness of $u \in X^{\rho}_T$ with $u=\Phi(u)$, which in particular verifies
\eqref{eq:persistenceu}. This will be achieved in the two steps below.\\

\underline{\textit{Step 1: $\Phi$ is well defined}}.
The goal is to see that $\Phi$ maps $X^{\rho}_T$ into itself, that is, for $u \in X^{\rho}_T$ we want to show that
\begin{equation}\label{eq:goalNLSA}
\|\Phi(u)\|_{X_T}\leq \rho.
\end{equation}
Firstly, recall the well-known result for the unweighted case $m=0$ (see \cite[Proof of Theorem 3.3]{Sta})  
\begin{equation}\label{eq:goalPHI'}
\|\Phi(u)
\|_{Y_{T}}
\lesssim 
\|u_0\|_{H^{1/4}}+
T^{\theta}\| u \|_{Y_T}^3.
\end{equation}
Along this proof, with a slight abuse of notation, $\theta$ represents a positive number that might change from one line to another but its particular value it is not relevant for the argument.
Hence, it remains to control the terms
\begin{align*}
\||x|^{m}\Phi(u)\|_{L^{\infty}_TL^2_x}
&\leq 
\||x|^{m}e^{-t(ia\partial^2_{x}+b\partial^3_{x})}u_0\|_{L^{\infty}_TL^2_x}\nonumber
\\
& \quad + 
\Big\|
|x|^{m}\int^t_{0}e^{-(t-t')(ia\partial^2_{x}+b\partial^3_{x})}(i \, c \, |u|^2 u 
+ d \, |u|^2 \partial_x u
+ e \, u^2 \partial_x \overline{u})\, dt'
\Big\|_{L^{\infty}_TL^2_x}\nonumber
\\ \label{iEq1w}
& =: L+NL. 
\end{align*}

\quad \\
We start analyzing the linear factor $L$. 
Plancherel's formula, \eqref{eq:fracderiv} and \eqref{eq:Schrsemig} allow us to write 
\begin{align*}
L
&= 
\|D^{m}_{\xi}(e^{it(a\xi^2+b\xi^3)}\widehat{u}_0)\|_{L^{\infty}_TL^2_{\xi}} \\
& \lesssim 
\Big\|
D^{m}_{\xi}\Big(\frac{e^{it(a\xi^2+b\xi^3)}}{\langle\xi\rangle^{2m}}\langle\xi\rangle^{2m}\widehat{u}_0\Big)
-
\frac{e^{it(a\xi^2+b\xi^3)}}{\langle\xi\rangle^{2m}}D^{m}_{\xi}\Big(\langle\xi\rangle^{2m}\widehat{u}_0\Big)
\Big\|_{L^{\infty}_TL^2_{\xi}}
\\
& \quad +
\Big\|
\frac{e^{it(a\xi^2+b\xi^3)}}{\langle\xi\rangle^{2m}}D^{m}_{\xi}\Big(\langle\xi\rangle^{2m}\widehat{u}_0\Big)
\Big\|_{L^{\infty}_TL^2_{\xi}}\\
& =: L_1+L_2.
\end{align*}
For the first term above we use inequality \eqref{NLem3.2} together with Corollary \ref{NLem2.4} 
and \eqref{eq:Sobolevinclusion} to deduce
\begin{equation}\label{eq:L1}
L_1
\lesssim 
\Big\|
Q_N D^{m}_{\xi}\Big(\frac{e^{it(a\xi^2+b\xi^3)}}{\langle \xi\rangle^{2m}}\Big)
\Big\|_{L^\infty_T L_\xi^{\infty}\ell^1_N}
\|u_0\|_{H^{2m}}
\lesssim 
(1+T)\|u_0\|_{H^{1/4}}.   
\end{equation}
As for $L_2$, we apply the commutator estimate \eqref{eq:commutator} and get
\begin{align*}
L_2 
& \leq 
\|[\langle\xi\rangle^{-2m},D^{m}_{\xi}](\langle\xi\rangle^{2m}\widehat{u}_0)\|_{L^2_{\xi}}+\|D^{m}_{\xi}\widehat{u}_0\|_{L^2_{\xi}}  
\lesssim 
\|u_0\|_{H^{1/4}}
+
\| |x|^{m}u_0\|_{L^2}. 
\end{align*}
Hence,
\begin{equation}\label{eq:Lsummary}
L 
\lesssim 
(1+T)\|u_0\|_{H^{1/4}}
+
\| |x|^{m}u_0\|_{L^2}.
\end{equation}

\quad \\
Next, we focus on the nonlinear factor $NL$. 
For convenience, we decompose it as follows
\begin{align}
 NL 
 & \lesssim 
 \Big\|
 D^{m}_{\xi}\Big(\int^t_{0}
 e^{i(t-t')(a \xi^2+b \xi^3)}(|u|^2 u )^{\wedge}(\xi,t')\, dt'\Big)
 \Big\|_{L_T^{\infty}L_{\xi}^{2}} \nonumber
\\
& \qquad  
+\Big\|D^{m}_{\xi}\Big(\int^t_{0}e^{i(t-t')(a \xi^2+b \xi^3)}(  d \, |u|^2 \partial_x u
+ e \, u^2 \partial_x \overline{u})^{\wedge }(\xi,t')\, dt'\Big)\Big\|_{L_T^{\infty}L_{\xi}^{2}}\nonumber  
\\ \label{A+B}
&  =: A+B.
\end{align}

\quad \\
\noindent
We can further write   
\begin{align*}
A 
&   \leq 
\Big\|D^{m}_{\xi}\Big(\int^t_{0}\frac{e^{i(t-t')(a \xi^2+b \xi^3)}}{\langle\xi\rangle^{2m}}\langle\xi\rangle^{2m}(|u|^2 u)^{\wedge}(\xi,t')\, dt'\Big)\\
& \qquad \qquad -
\int^t_{0}\frac{e^{i(t-t')(a \xi^2+b \xi^3)}}{\langle\xi\rangle^{2m}}D^{m}_{\xi}\Big(\langle\xi\rangle^{2m}(|u|^2 u)^{\wedge}(\xi,t')\Big)\, dt'\Big\|_{L_T^{\infty}L_{\xi}^{2}}\\
& \qquad  +  \Big\|\int^t_{0}\frac{e^{i(t-t')(a \xi^2+b \xi^3)}}{\langle\xi\rangle^{2m}}D^{m}_{\xi}\Big(\langle\xi\rangle^{2m}(|u|^2 u)^{\wedge}(\xi,t')\Big)\, dt'\Big\|_{L_T^{\infty}L_{\xi}^{2}}\\
& = : A_1+A_2.
\end{align*}
Proceeding as in 
\eqref{eq:L1} and using also the H\"older and the Minkowski inequalities yield to
\begin{align}\label{A1}
A_{1}
&\lesssim 
(1+T)
\||u|^2 u\|_{L^1_T H^{2m}_x} 
\leq (1+T) T^{1/2}
\||u|^2 u\|_{L^2_T H^{1/4}_x} \nonumber\\
&   \lesssim 
T^\theta
\||u|^2u\|_{L^2_T L^2_x}
+
T^\theta
\|D_x^{1/4}(|u|^2u)\|_{L^2_TL^2_x}\nonumber\\
&  
=:  A_{1,1} + A_{1,2}.
\end{align}
Using  \eqref{Sta4.7} we obtain, 
\begin{align}\label{u2u}
A_{1,1}
& =
T^\theta
\Big(\int^T_{0} \int_{\R} |u|^6 \, dx \, dt\Big)^{1/2}
\lesssim 
T^\theta
\Big(
\int^T_{0} 
\|u\|^4_{L_x^{\infty}}
\|u\|^2_{L_x^2} \, dt
\Big)^{1/2}\nonumber\\
&  \lesssim 
T^\theta
\|u\|_{L^{\infty}_TL^2_x}
\|u\|^2_{L^5_TL^{\infty}_x}
\lesssim
T^\theta
\mu^T_{1}(u) 
\mu^T_{4}(u)^2,
\end{align}
and moreover an application of \eqref{eq:chainrule} similarly implies
\begin{align*} 
A_{1,2} 
& \lesssim  
T^\theta
\Big( \int^T_{0}\|D_x^{1/4}
(|u|^2u)\|^2_{L^2_x}\, dt\Big)^{1/2} 
\lesssim 
T^\theta
\Big( \int^T_{0}
\|u\|^4_{L^{\infty}_x}
\|D_x^{1/4}u\|_{L^2_x}^2\, dt\Big)^{1/2} \nonumber \\
& \lesssim 
T^\theta
\|D_x^{1/4}u\|_{L^{\infty}_TL^2_x}
\|u\|^2_{L^5_TL^{\infty}_x}
\lesssim 
T^\theta 
\mu^T_{1}(u) \mu^T_{4}(u)^2.
\end{align*}
In order to analyze  $A_2$ it is helpful to introduce the commutator below and invoke property \eqref{eq:commutator},
\begin{align}\label{eq:A2}
A_2 
& \leq
\Big\|
\int^t_{0}
e^{i(t-t')(a \xi^2+b \xi^3)}
[
\langle\xi\rangle^{-2m}, D^{m}_{\xi}
]
\Big(\langle\xi\rangle^{2m}(|u|^2 u)^{\wedge}(\xi,t')\Big)\, dt'
\Big\|_{L_T^{\infty}L_{\xi}^{2}} \nonumber \\ 
& \quad  
+
\Big\|\int^t_{0}e^{i(t-t')(a \xi^2+b \xi^3)}D^{m}_{\xi}\Big((|u|^2 u)^{\wedge}(\xi,t')\Big)\, dt'\Big\|_{L_T^{\infty}L_{\xi}^{2}} \nonumber\\
& \leq 
\Big\|
[\langle\xi\rangle^{-2m}, D^{m}_{\xi}]
\Big(\langle\xi\rangle^{2m}(|u|^2 u)^{\wedge}(\xi,t')\Big)\Big\|_{L_T^{1}L_{\xi}^{2}} \nonumber\\
& \quad 
+
\Big\|\int^t_{0}e^{i(t-t')(a \xi^2+b \xi^3)}D^{m}_{\xi}\Big((|u|^2 u)^{\wedge}(\xi,t')\Big)\, dt'\Big\|_{L_T^{\infty}L_{\xi}^{2}} \nonumber \\
& \lesssim 
\||u|^2 u\|_{L_T^{1}H_{x}^{2m}}
+
\Big\|\int^t_{0}e^{-(t-t')(ia\partial^2_{x}+b\partial^3_{x})}|x|^{m}
|u|^2 u \, dt'\Big\|_{L_T^{\infty}L_{x}^{2}} \nonumber\\ 
&   =:  A_{2,1}+A_{2,2}.
\end{align}
Notice that the first inequality of \eqref{A1} shows that $A_{2,1}$ can be treated as $A_1$. 
On the other hand, for $A_{2,2}$ we apply 
Minkowski's integral inequality, Plancherel's theorem, H\"older's inequality
and \eqref{Sta4.7} to arrive at
\begin{align*} 
A_{2,2} 
& \lesssim  
\| |u|^2 |x|^{m} u \|_{L^1_TL^2_x}   \leq
\|u\|_{L^4_T L^\infty_x}^{2} \||x|^{m} u \|_{L^2_TL^2_x}  
\leq 
T^\theta 
\|u\|^{2}_{L^5_T L^\infty_x}
\||x|^{m} u \|_{L^{\infty}_TL^2_x} \nonumber \\
& \lesssim
T^\theta
\mu^T_{4}(u)^2
\||x|^{m} u \|_{L^{\infty}_TL^2_x}.
\end{align*}
Combining above estimates we deduce
\begin{equation}\label{eq:Asummary}
A
\lesssim
T^\theta
\mu^T_{4}(u)^2
\Big(
\mu^T_{1}(u) 
+
\||x|^{m} u \|_{L^{\infty}_TL^2_x}
\Big).
\end{equation}

\quad \\
We continue with the analysis of the $B$ term in \eqref{A+B} and as usual decompose
\begin{align*}
B 
&  \leq
\Big\|D^{m}_{\xi}\Big(\int^t_{0}\frac{e^{i(t-t')(a \xi^2+b \xi^3)}}{\langle\xi\rangle^{2m}}\big(\langle\xi\rangle^{2m}(  d \, |u|^2 \partial_x u
+ e \, u^2 \partial_x \overline{u})^{\wedge}(\xi,t')\big)\, dt'\Big)\\
& \qquad -
\int^t_{0}\frac{e^{i(t-t')(a \xi^2+b \xi^3)}}{\langle\xi\rangle^{2m}}D^{m}_{\xi}\Big(\langle\xi\rangle^{2m}( d \, |u|^2 \partial_x u
+ e \, u^2 \partial_x \overline{u})^{\wedge}(\xi,t')\Big)\, dt'\Big\|_{L_T^{\infty}L_{\xi}^{2}}\\
& \quad + 
\Big\|\int^t_{0}\frac{e^{i(t-t')(a \xi^2+b \xi^3)}}{\langle\xi\rangle^{2m}}D^{m}_{\xi}\Big(\langle\xi\rangle^{2m}(d\,|u|^2\partial_x u
+ e\,u^2\partial_x\overline{u})^{\wedge}(\xi,t')\Big)\, dt'\Big\|_{L_T^{\infty}L_{\xi}^{2}}
\\
& =:B_1+B_2.
\end{align*}
The same arguments presented in \eqref{A1} to treat $A_1$, provide now
\begin{align*} 
B_{1} 
& \lesssim 
T^\theta 
\Big(
\|u^2\partial_x u\|_{L^2_x L^2_T}
+
\|D_x^{1/4}
(|u|^2 \partial_x u)\|_{L^2_x L^2_T}
+
\|D_x^{1/4}
(u^2 \partial_x \overline{u})\|_{L^2_x L^2_T} 
\Big) \nonumber\\
& =: B_{1,1} + B_{1,2} + B_{1,3}.
\end{align*}
Next, notice that
\begin{equation}\label{eq:B11}
B_{1,1}
\leq
T^\theta 
\|u\|_{L^4_x L^\infty_T}^2
\|\partial_x u\|_{L^\infty_x L^2_T}
\leq 
T^\theta 
\mu_5^T(u)^2
\mu_2^T(u).
\end{equation}
The analysis of $B_{1,2}$ and $B_{1,3}$ is similar, so we only discuss one case. Using 
\eqref{eq:KPVThA8} and \eqref{eq:KPVThA6} we get
\begin{align*}
B_{1,3}
& \lesssim
T^\theta
\Big(
\|
u^2
D_x^{1/4} \partial_x u
\|_{L^2_x L^2_T}
+
\|
\partial_x u
D_x^{1/4}u^2 
\|_{L^2_x L^2_T}
+
\|\partial_x u\|_{L^{20}_xL^{5/2}_T}
\|D_x^{1/4} u^2 \|_{L^{20/9}_xL^{10}_{T}}
\Big) \\
& \lesssim
T^\theta
\Big(
\|u^2\|_{L^2_x L^\infty_T}
\|D_x^{1/4} \partial_x u\|_{L^\infty_x L^2_T}
+
\|\partial_x u\|_{L^{20}_xL^{5/2}_T}
\|D_x^{1/4} u^2 \|_{L^{20/9}_xL^{10}_{T}}
\Big) \\
& \lesssim
T^\theta
\Big(
\|u\|_{L^4_x L^\infty_T}^2
\|D_x^{1/4} \partial_x u\|_{L^\infty_x L^2_T}
+
\|\partial_x u\|_{L^{20}_xL^{5/2}_T}
\|u\|_{L^{4}_xL^{\infty}_{T}}
\|D_x^{1/4} u \|_{L^{5}_xL^{10}_{T}}
\Big) \\
& \leq
T^\theta
\Big(
\mu_5^T(u)^2
\mu_2^T(u)
+
\mu_3^T(u)
\mu_5^T(u)
\mu_4^T(u)
\Big).
\end{align*}

For the treatment of $B_2$ we start as in  \eqref{eq:A2}, 
\begin{align*}
B_2
&  \lesssim 
T^{1/2}
\|
d\,|u|^2\partial_x u
+ e\,u^2\partial_x\overline{u}
\|_{L_T^{2}H_{x}^{1/4}} \\
& \quad +
\Big\|
\int^t_{0}
e^{-(t-t')(ia\partial^2_{x}+b\partial^3_{x})}
|x|^{m}
(d\,|u|^2\partial_x u
+ e\,u^2\partial_x\overline{u})
dt'\Big\|_{L_T^{\infty}L_{x}^{2}}\\ 
&  
=:
B_{2,1}+B_{2,2}.
\end{align*}
It is clear that $B_{2,1}$ behaves as $B_1$. 
For the rest of the argument, let's take a compactly supported and smooth function $\chi$,  with the properties that $0 \leq \chi \leq 1$ and $\chi \equiv 1$ on $(-1,1)$. Then, one has
\begin{align*}
B_{2,2}
& \leq
\Big\|\int^t_{0}
e^{-(t-t')(ia \partial^2_x+b\partial^3_x)}
|x|^{m}\chi(x)(d|u|^2 \partial_xu
+eu^2\partial_x \overline{u})\, dt'\Big\|_{L_T^{\infty}L_{x}^{2}} 
\\ 
&  \quad 
+ \Big\|\int^t_{0}
e^{-(t-t')(ia \partial^2_x+b\partial^3_x)}
|x|^{m}(1-\chi(x))(d|u|^2 \partial_xu
+eu^2\partial_x \overline{u})\, dt'\Big\|_{L_T^{\infty}L_{x}^{2}}
\\ &   
=:B_{2,2,1}+B_{2,2,2}.
\end{align*}
By applying Minkowski's integral inequality, Plancherel's identity and exploiting the compact support of $\chi$, we easily get
\begin{align*} 
B_{2,2,1} 
& \lesssim  
\|
|x|^{m}\chi(x)
u^2 \partial_x u
\|_{L^1_TL^2_x}
\lesssim
T^{1/2}
\|u^2 \partial_x u \|_{L^2_TL^2_x},
\end{align*}
and this factor was already controlled in \eqref{eq:B11} above.\\

So far, the parameters $d$ and $e$ have been arbitrary complex numbers. However, for the discussions that follow we found necessary to restrict ourselves to the particular case of $d=2e$, which allows to write part of the nonlinearity as a full derivative, that is,
\begin{equation}\label{eq:fullderiv}
d \, |u|^2 \partial_x u
+ e \, u^2 \partial_x \overline{u}
=
e \partial_x (|u|^2u).
\end{equation}
Therefore, we express
\begin{align*}  
B_{2,2,2}
& \lesssim  
\Big\|\int^t_{0}
e^{-(t-t')(ia \partial^2_x+b\partial^3_x)}
[|x|^{m}(1-\chi(x)),  \partial_x] (|u|^2u)
\, dt' \Big\|_{L_T^{\infty}L_x^2} \nonumber\\ 
&  \quad+
\Big\|\int^t_{0}
e^{-(t-t')(ia \partial^2_x+b\partial^3_x)}
\partial_x
\Big(|x|^{m}(1-\chi(x))|u|^2u\Big)
\, dt' \Big\|_{L_T^{\infty}L_x^2} \nonumber\\
& =: B_{2,2,2,1}+B_{2,2,2,2}.
\end{align*}
Once again, Plancherel's theorem and the commutator estimate   \eqref{eq:commutator} provide
\begin{align*}
B_{2,2,2,1} 
& \lesssim 
\|
[|x|^{m}(1-\chi(x)),  \partial_x] |u|^2u
\|_{L_T^{1}L_x^2}
\lesssim
\|
\partial_x\big(|x|^{m}(1-\chi)\big)
\|_{L_x^{\infty}}\||u|^2u\|_{L_T^1L_x^2}\\
& \lesssim 
T^{1/2}\||u|^2u\|_{L_T^2L_x^2},
\end{align*}
and we can continue as in \eqref{u2u}.
On the other hand, by means of  \eqref{nah3.3} we see that
\begin{align*}
B_{2,2,2,2}
& \lesssim \||x|^{m}(1-\chi)|u|^2u\|_{L_x^1L_T^2}
\lesssim  
\|u\|^2_{L_x^4L_T^{\infty}}
\||x|^{m}u\|_{L_T^2L_x^2} 
\leq 
T^{1/2}
\mu_5^T(u)^2
\||x|^{m}u\|_{L_T^\infty L_x^2}.
\end{align*}
Summing up,
\begin{equation}\label{eq:summaryB}
B
\lesssim 
T^\theta
\Big(
\mu_2^T(u)
\mu_5^T(u)^2
+
\mu_3^T(u)
\mu_4^T(u)
\mu_5^T(u)
+
\mu^T_{1}(u) 
\mu^T_{4}(u)^2
+
\mu_5^T(u)^2
\||x|^{m}u\|_{L_T^\infty L_x^2}
\Big).
\end{equation}
\quad \\
In conclusion, for $u \in X_T^\rho$,
\eqref{eq:goalPHI'},
\eqref{eq:Lsummary},
\eqref{eq:Asummary}
and \eqref{eq:summaryB} lead to
\begin{align*}
\|\Phi(u) \|_{X_T} 
& \leq   
C(1+T)
\|u_0\|_{H^{1/4}}
+
C \||x|^m  u_0\|_{L^2_x}  
+
C T^{\theta}\rho^3
\end{align*}
for some $C, \,\theta>0$. Then, taking
\begin{equation}\label{eq:rho}
\rho:= 2 C (\|u_0\|_{H^{1/4}}+\||x|^m  u_0\|_{L^2_x})
\end{equation}
and   $T>0$ sufficiently small verifying 
\begin{equation}\label{eq:Tsmaill1}
CT\|u_0\|_{H^{1/4}}
+
CT^{\theta}\rho^3 \leq \frac{\rho}{2},
\end{equation}
we deduce \eqref{eq:goalNLSA}.

\quad \\
\underline{\textit{Step 2: $\Phi$ is a contraction}}.
Let $u,v \in X_T^\rho$, for $\rho$ defined in \eqref{eq:rho}. We want to see that
\begin{equation}\label{eq:contraction}
\|\Phi(u)-\Phi(v)\|_{X_T}
\leq K \|u-v\|_{X_T}
\end{equation}
for some $0<K<1$. As in the previous step, it is enough to control the norm 
\begin{equation*}
\||x|^{m}(\Phi(u)-\Phi(v))\|_{L^{\infty}_TL^2_x},
\end{equation*}
where 
\begin{align*}
\Phi(u)-\Phi(v)
& =
\int_0^t 
e^{-(t-t')(ia \partial^2_x+b\partial^3_x)} \\
& \quad \times 
\big( 
c \, \big[|v|^2 v - |u|^2 u \big]
+ d \, \big[|v|^2 \partial_x v - |u|^2 \partial_x u \big]
+ e \, \big[v^2 \partial_x \overline{v} - u^2 \partial_x \overline{u} \big]
\big) \, dt'. 
\end{align*}
Essentially now the task is to convert the differences of the above nonlinear terms into differences of linear factors $u-v$. This can be achieved with the pointwise identities
\begin{align*}
|v|^2 v - |u|^2 u
& \, = \,
(|v|^2+u\overline{v})(v-u)
+
u^2 (\overline{v-u}),\\
|v|^2 \partial_x v - |u|^2 \partial_x u
& \, = \, 
|v|^2 \partial_x(v-u)
+v \partial_x u (\overline{v-u})
+\overline{u} \partial_x u (v-u),\\
v^2 \partial_x \overline{v} - u^2 \partial_x 
\overline{u}
& \, = \, 
v^2 \partial_x(\overline{v-u})
+
(v+u) \partial_x \overline{u}(v-u)
\end{align*}
and repeating the arguments in Step 1. Therefore, it is possible to obtain
$$
\|\Phi(u)- \Phi(v)\|_{X_T}
\lesssim
 T^{\theta} (\|u\|_{X_T} + \|v\|_{X_T})^2 \|u-v\|_{X_T}
\leq 
C T^{\theta}\rho^2\|u-v\|_{X_T},
$$
for some $C, \theta>0$. Finally, choosing $T>0$ satisfying simultaneously \eqref{eq:Tsmaill1} and 
$
C T^{\theta}\rho^2<1,   
$
we conclude \eqref{eq:contraction}.  \qed



\begin{thebibliography}{10}

\bibitem{BusJi}
{\sc E.~Bustamante, J.~Jim\'enez, and J.~Mej\'ia}, {\em Cauchy problems for
  fifth-order {KDV} equations in weighted {S}obolev spaces}, Electron. J.
  Differ. Equ., 2015 (2015), pp.~1--24.

\bibitem{BusJi2016}
\leavevmode\vrule height 2pt depth -1.6pt width 23pt, {\em The
  {Z}akharov-{K}uznetsov equation in weighted {S}obolev spaces}, J. Math. Anal.
  Appl., 433 (2016), pp.~149--175.

\bibitem{BusJi2018}
\leavevmode\vrule height 2pt depth -1.6pt width 23pt, {\em A note on the
  {O}strovsky equation in weighted {S}obolev spaces}, J. Math. Anal. Appl., 460
  (2018), pp.~1004--1018.

\bibitem{Calderon}
{\sc A.~Calder\'on}, {\em Commutators of singular integral operators}, Proc.
  Nat. Acad. Sci. U.S.A., 53 (1965), pp.~1092--1099.

\bibitem{Car2006}
{\sc X.~Carvajal}, {\em Sharp global well-posedness for a higher order
  {S}chr\"odinger equation}, J. Fourier Anal. Appl., 12 (2006), pp.~53--70.

\bibitem{Car2013}
\leavevmode\vrule height 2pt depth -1.6pt width 23pt, {\em On the ill-posedness
  for a nonlinear {S}chr\"odinger-{A}iry equation}, Quart. Appl. Math., 71
  (2013), pp.~267--281.

\bibitem{CN2010}
{\sc X.~Carvajal and W.~Neves}, {\em Persistence of solutions to higher order
  nonlinear {S}chr\"{o}dinger equation}, J. Differ. Equ., 249 (2010),
  pp.~2214--2236.

\bibitem{CN2015}
\leavevmode\vrule height 2pt depth -1.6pt width 23pt, {\em Persistence property
  in weighted {S}obolev spaces for nonlinear dispersive equations}, Quart.
  Appl. Math., 73 (2015), pp.~493--510.

\bibitem{CCT2003}
{\sc M.~Christ, J.~Colliander, and T.~Tao}, {\em Asymptotics, frequency
  modulation, and low regularity ill-posedness for canonical defocusing
  equations}, Amer. J. Math., 125 (2003), pp.~1235--1293.

\bibitem{CKST2002}
{\sc J.~Colliander, M.~Keel, G.~Staffilani, H.~Takaoka, and T.~Tao}, {\em A
  refined global well-posedness result for {S}chr\"{o}dinger equations with
  derivative}, SIAM J. Math. Anal., 34 (2002), pp.~64--86.

\bibitem{CKSTT}
\leavevmode\vrule height 2pt depth -1.6pt width 23pt, {\em Sharp global
  well-posedness for {K}d{V} and modified {K}d{V} on {$\mathbb R$} and
  {$\mathbb T$}}, J. Amer. Math. Soc., 16 (2003), pp.~705--749.

\bibitem{CunPas}
{\sc A.~Cunha and A.~Pastor}, {\em The {IVP} for the
  {B}enjamin-{O}no-{Z}akharov-{K}uznetsov equation in weighted {S}obolev
  spaces}, J. Math. Anal. Appl., 417 (2014), pp.~660--693.

\bibitem{FonLinPn}
{\sc G.~Fonseca, F.~Linares, and G.~Ponce}, {\em On persistence properties in
  fractional weighted spaces}, Proc. Amer. Math. Soc., 145 (2015),
  pp.~5353--5367.

\bibitem{FonPn}
{\sc G.~Fonseca and G.~Ponce}, {\em The {IVP} for the {B}enjamin-{O}no equation
  in weighted {S}obolev spaces}, J. Funct. Anal., 260 (2011), pp.~436--459.

\bibitem{ILP2013}
{\sc P.~Isaza, F.~Linares, and G.~Ponce}, {\em On decay properties of solutions
  of the {$k$}-generalized {K}d{V} equation}, Comm. Math. Phys., 324 (2013),
  pp.~129--146.

\bibitem{ILP2015}
\leavevmode\vrule height 2pt depth -1.6pt width 23pt, {\em On the propagation
  of regularity and decay of solutions to the k-generalized {K}orteweg-de
  {V}ries equation}, Commun. Partial Differ. Equ., 40 (2015), pp.~1336--1364.

\bibitem{Jim2013}
{\sc U.~J. Jim\'enez}, {\em The {Ca}uchy problem associated to the {B}enjamin
  equation in weighted {S}obolev spaces}, J. Differ. Equ., 254 (2013),
  pp.~1863--1892.

\bibitem{Kato1983}
{\sc T.~Kato}, {\em On the {C}auchy problem for the (generalized) {K}orteweg-de
  {V}ries equation}, in Studies in applied mathematics, vol.~8 of Adv. Math.
  Suppl. Stud., Academic Press, New York, 1983, pp.~93--128.

\bibitem{KPV1993}
{\sc C.~E. Kenig, G.~Ponce, and L.~Vega}, {\em Well-posedness and scattering
  results for the generalized {K}orteweg-de {V}ries equation via the
  contraction principle}, Comm. Pure Appl. Math., 46 (1993), pp.~527--620.

\bibitem{KPV}
\leavevmode\vrule height 2pt depth -1.6pt width 23pt, {\em On the ill-posedness
  of some canonical dispersive equations}, Duke Math. J., 106 (2001),
  pp.~617--633.

\bibitem{K}
{\sc Y.~Kodama}, {\em Optical solitons in a monomode fiber}, J. Statist. Phys.,
  39 (1985), pp.~597--614.

\bibitem{HK}
{\sc Y.~Kodama and A.~Hasegawa}, {\em Nonlinear pulse propagation in a monomode
  dielectric guide}, IEEE J. Quantum Electron., 23 (1987), pp.~510--524.

\bibitem{LP2015}
{\sc F.~Linares and G.~Ponce}, {\em Introduction to Nonlinear Dispersive
  Equations}, Springer-Verlag New York, 2015.

\bibitem{Murray}
{\sc M.~Murray}, {\em Commutators with fractional differentiation and {BMO}
  {S}obolev spaces}, Ind. Univ. Math. J., 34 (1985), pp.~205--215.

\bibitem{Nah2012}
{\sc J.~Nahas}, {\em A decay property of solutions to the k-generalized {K}d{V}
  equation}, Adv. Differ. Equat., 17 (2012), pp.~833--858.

\bibitem{NP2009}
{\sc J.~Nahas and G.~Ponce}, {\em On the persistent properties of solutions to
  semi-linear {S}chr\"{o}dinger equation}, Commun. Partial Differ. Equ., 34
  (2009), pp.~1208--1227.

\bibitem{Sta}
{\sc G.~Staffilani}, {\em On the generalized {K}orteweg-de {V}ries-type
  equations}, Differ. Integral Equ., 10 (1997), pp.~777--796.

\bibitem{Tak1999}
{\sc H.~Takaoka}, {\em Well-posedness for the one-dimensional nonlinear
  {S}chr\"{o}dinger equation with the derivative nonlinearity}, Adv. Differ.
  Equat., 4 (1999), pp.~561--580.

\bibitem{Tsu}
{\sc Y.~Tsutsumi}, {\em {$L^2$}-solutions for nonlinear {S}chr\"odinger
  equations and nonlinear groups}, Funkcial. Ekvac., 30 (1987), pp.~115--125.

\end{thebibliography}

\end{document}